\newtheorem{thm}{Theorem}[section]
\newtheorem*{theorem*}{Theorem}
\newtheorem*{acknowledgement*}{Acknowledgement}
\newtheorem{cor}[thm]{Corollary}
\newtheorem{lem}[thm]{Lemma}
\theoremstyle{definition}
\newtheorem{defn}[thm]{Definition}
\theoremstyle{remark}
\newtheorem{rem}[thm]{Remark}
\numberwithin{equation}{section}
\newcommand{\dstyle}{\displaystyle}
\newcommand{\hide}[1]{}
\title[Integrability of scalar curvature and normal metric]{Integrability of scalar curvature and normal metric on conformally flat manifolds}
\date{May 7, 2017}
\author[Shengwen Wang]{Shengwen Wang}
\address{Shengwen Wang, Department of Mathematics, Johns Hopkins University,
3400 N. Charles Street, Baltimore, MD 21218}
\address{ email: swang@math.jhu.edu}
\thanks{The second author is partially supported
by NSF grant DMS-1612015}
\author[Yi Wang]{Yi Wang}
\address{Yi Wang, Department of Mathematics, Johns Hopkins University, 3400 N. Charles Street, Baltimore, MD 21218,}
\address{ email: ywang@math.jhu.edu}
\subjclass{Primary 53A30; Secondary 53C21}
\begin{document}

\maketitle

\begin{abstract}
On a manifold $(\mathbb{R}^n, e^{2u} |dx|^2)$, we say $u$ is normal if the $Q$-curvature equation that $u$ satisfies
$(-\Delta)^{\frac{n}{2}} u = Q_g e^{nu}$ can be written as the integral form $u(x)=\frac{1}{c_n}\int_{\mathbb R^n}\log\frac{|y|}{|x-y|}Q_g(y)e^{nu(y)}dy+C$. 
In this paper, we show that the integrability assumption on the negative part of the scalar curvature implies the metric is normal. As an application, we prove a bi-Lipschitz equivalence theorem for conformally flat metrics. 

\end{abstract}

\section{Introduction}

The $Q$-curvature arises naturally as a conformal invariant associated to
the Paneitz operator. When $n=4$, the Paneitz operator is defined as:
$$P_g=\Delta_g^2+\delta(\frac{2}{3}R_g\,g-2\, {\rm Ric_g})d,$$
where $\delta$ is the divergence operator, $d$ is the differential operator, $R$ is the scalar curvature of $g$, and ${\rm Ric}$
is the Ricci curvature tensor. The Branson's $Q$-curvature \cite{Branson} is defined as
$$Q_g=\frac{1}{12}\left\{-\Delta_g R_g +\frac{1}{4}R_g^2 -3|E_g|^2 \right\},
$$
where $E_g$ is the traceless part of ${\rm Ric_g}$, and $|\,\cdot\,|$ is the point-wise norm taken with respect to the metric $g$.
Under the conformal change $g=e^{2u}g_0$, the Paneitz operator transforms by $P_{g}=e^{-4u}P_{g_0}$,
and
$Q_{g}$ satisfies the fourth order equation
\begin{equation}\label{Paneitz4}
P_{g_{0}}u+2Q_{g_0}=2Q_{g}e^{4u}.\end{equation}
This is analogous to the transformation law satisfied by the Laplacian operator $-\Delta_g$ and the Gaussian curvature $K_g$ on surfaces,
\begin{equation}-\Delta_{g_0}u+K_{g_0}=K_{g}e^{2u}.\end{equation}

When the background metric $g_0$ is the flat metric $|dx|^2$, the transformation law \eqref{Paneitz4} that the $Q$-curvature satisfies becomes
\begin{equation}
\Delta^2_{g_0} u =2Q_{g}e^{4u}.\end{equation}

The invariance of the integration of the $Q$-curvature in dimension $4$ is due to the Gauss-Bonnet-Chern formula for a closed manifold $M$:
\begin{equation}\label{GBCEq}\chi(M)=\dstyle\frac{1}{4\pi^2} \int_{M}\left(\frac{|W_g|^2}{8}+Q_g\right) dv_g,\end{equation}
where $W_g$ denotes the Weyl tensor. For complete manifolds with conformally flat ends, the work of Chang, Qing, and Yang \cite{CQY1} proves the formula between the asymptotic isoperimetric ratio and the integration of the $Q$-curvature.

In Chang, Qing and Yang's work \cite{CQY1, CQY2}, they used an important notion ``normal metric" on conformally flat manifolds to prove the formula of the asymptotic isoperimetric ratio. Normal metric was first introduced by Huber \cite{Huber}, and later used by Finn \cite{Finn} and \cite{Hartman}. Huber proved that in dimension two, for a surface with finite total Gauss curvature, the metric is always normal. In \cite{CQY1}'s work, it is a key observation that if the scalar curvature at infinity is nonnegative, then the metric is normal. The proof mostly uses maximum principle and properties of harmonic functions.
In this paper, we generalize this result. We show that if the negative part of the scalar curvature is integrable, then the metric is normal. The main result is the following theorem. 

\begin{thm}\label{thm:main}
Let $(M^n,g)=(\mathbb R^n,e^{2u} |dx|^2)$ be a noncompact complete conformally flat metric of even dimension, satisfying  
\begin{equation}\int_{M^n}|Q_g|d v_g<\infty, \end{equation}
and
\begin{equation}\label{totalR}
\int_{\mathbb{R}^n}(R_{g}^-)^{\frac{n}{2}}dv_g<\infty,
\end{equation}
where $R_{g}^-$ is the negative part of the scalar curvature. 
%$R_{g}=R_{g}^+-R_{g}^-$.
Then the metric is normal.
%\begin{equation}\label{totalQ}
%\frac{1}{c_n}\int_{\mathbb{R}^n}Q_{g}(x)dv_g\leq\chi(\mathbb R^n)=1.
%\end{equation}
%Their difference is the isoperimetric ratio of the asymptotic flat end. 
\end{thm}

Therefore, as a direct corollary, we generalize Chang, Qing and Yang's work to the following theorem.

\begin{cor}
Let $(M^n,g)=(\mathbb{R}^n, e^{2w} |dx|^2)$ be a noncompact complete conformally flat manifold
of even dimension, satisfying 
\begin{equation}\int_{M^n}|Q_g|d v_g<\infty, \end{equation}
and
\begin{equation}\label{scalar}
\int_{M^n} (R_g^-)^{\frac{n}{2}} dv_g\leq \infty.
\end{equation} Then
\begin{equation}\label{totalQ}
\dstyle \frac{1}{c_n}\int_{M^n} Q_g dv_g\leq  \chi(\mathbb{R}^{n} )=1.
\end{equation}

Moreover, the difference of the two sides in the above inequality is given by the asymptotic isoperimetric ratio:
 \begin{equation}
\displaystyle \chi(\mathbb{R}^{n} )-\frac{1}{c_n}\int_{\mathbb{R}^n} Q_g dv_g=\lim_{r\rightarrow
\infty } \frac{{\rm Vol}_g(\partial B_{j}(r))^{n/(n-1)}}{n\omega_n^{\frac{1}{n}} \cdot {\rm Vol}_g(B_{j}(r))}.
\end{equation}
\noindent Here $B_{j}(r)$ denotes the Euclidean ball with radius $r$ at the $j$-th end.
\end{cor}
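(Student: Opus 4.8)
\emph{The plan} is to produce the $Q$-curvature potential by hand, reduce the statement to a Liouville property for polyharmonic functions, and then use the scalar-curvature hypothesis together with completeness to supply the growth control that forces constancy.

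First I would define
\[ v(x)=\frac{1}{c_n}\int_{\R^n}\log\frac{|y|}{|x-y|}\,Q_g(y)\,e^{nu(y)}\,dy . \]
Since $\int_{M^n}|Q_g|\,dv_g=\int_{\R^n}|Q_g|e^{nu}\,dx<\infty$, a standard splitting of the $y$-integration over $\{|y|\le|x|/2\}$, the annulus $\{|x|/2\le|y|\le 2|x|\}$, and $\{|y|\ge 2|x|\}$ shows that $v$ converges almost everywhere, lies in $L^1_{\mathrm{loc}}$, and satisfies $(-\Delta)^{n/2}v=Q_g e^{nu}$ in the distributional sense (here one uses that $\log\frac1{|x|}$ is, up to the constant $c_n$, the fundamental solution of $(-\Delta)^{n/2}$ in even dimension $n$). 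The same splitting yields the asymptotics $v(x)=-\frac{\alpha}{c_n}\log|x|+o(\log|x|)$ with $\alpha=\int_{\R^n}Q_g e^{nu}\,dx$. Setting $h:=u-v$, we obtain $\Delta^{n/2}h=0$ on $\R^n$, so $h$ is polyharmonic, hence real-analytic, and the theorem is reduced to proving $h\equiv\mathrm{const}$.

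Next I would bring in the scalar curvature. Under $g=e^{2u}|dx|^2$ the flat conformal law gives
\[ \Delta u=-\frac{1}{2(n-1)}R_g e^{2u}-\frac{n-2}{2}|\nabla u|^2\le \frac{1}{2(n-1)}R_g^- e^{2u}, \]
and the hypothesis \eqref{totalR} states precisely that $f:=R_g^- e^{2u}$ belongs to $L^{n/2}(\R^n,dx)$, since $\int f^{n/2}\,dx=\int (R_g^-)^{n/2}e^{nu}\,dx=\int (R_g^-)^{n/2}\,dv_g<\infty$. Let $\phi$ be the Newtonian potential with $\Delta\phi=\frac{1}{2(n-1)}f$. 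The exponent $n/2$ is exactly the borderline case of the Hardy--Littlewood--Sobolev / Riesz-potential estimate (where $\frac1q=\frac2n-\frac2n=0$), so $\phi$ lies in $\mathrm{BMO}$ and grows at most logarithmically. Then $u-\phi$ is superharmonic, which converts the one-sided curvature control into an upper bound for $u$ of logarithmic type at infinity.

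Finally, completeness of $e^{2u}|dx|^2$ forbids $u$ from decaying faster than $-\log|x|$ (integrate $e^{u}$ along rays to infinity), producing a matching lower bound. Combining this lower bound, the upper bound of the previous step, and the logarithmic asymptotics of $v$, I would conclude $h(x)=O(\log|x|)=o(|x|)$. A polyharmonic function on $\R^n$ with strictly sublinear growth is constant, by the interior derivative estimates for solutions of $\Delta^{n/2}h=0$; hence $h\equiv C$ and $u=v+C$, which is exactly the normal representation. The main obstacle is the middle step: unlike the case $R_g\ge 0$ of Chang--Qing--Yang, where an upper bound for $u$ follows directly from the maximum principle, here the negative curvature enters only through a borderline $L^{n/2}$ bound, so the upper control on $u$ must be squeezed out of the Riesz-potential estimate for $\phi$ and then reconciled with the completeness lower bound before the polyharmonic Liouville theorem can be applied.
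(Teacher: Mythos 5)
Your reduction to showing $h=u-v$ is constant, with $\Delta^{n/2}h=0$, matches the paper's starting point, but the middle of your argument has two steps that do not work as stated. First, from $\Delta(u-\phi)\le 0$ you conclude that superharmonicity ``converts the one-sided curvature control into an upper bound for $u$.'' Superharmonic functions obey a \emph{minimum} principle: the mean value inequality reads $(u-\phi)(p)\ge \fint_{B_r(p)}(u-\phi)$, which bounds \emph{averages} of $u$ from above by a fixed pointwise value, but gives no pointwise upper bound on $u$ at infinity (a superharmonic function on $\R^n$ can be unbounded above). Likewise, completeness only forces $\int_\gamma e^u\,ds=\infty$ along divergent curves; since curves can avoid small sets, $u$ may still be arbitrarily negative on a sequence of shrinking balls going to infinity, so no pointwise lower bound $u\ge -(1+o(1))\log|x|$ follows. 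Second, even granting two-sided $O(\log|x|)$ control \emph{on averages}, your final Liouville step needs genuine two-sided sublinear growth: a one-sided bound on a polyharmonic function of order $\ge 2$ proves nothing ($-|x|^2$ is biharmonic and bounded above but not constant). Since what your argument actually produces is only a one-sided average bound, the polyharmonic Liouville theorem cannot be invoked. (The pointwise asymptotic $v(x)=-\tfrac{\alpha}{c_n}\log|x|+o(\log|x|)$ also requires care, as it can fail on exceptional sets where $Q_ge^{nu}$ concentrates.)

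The paper circumvents exactly this obstacle by never seeking pointwise growth bounds. It works with annulus averages: the conformal scalar curvature equation $\Delta u=-\tfrac{R_ge^{2u}}{2(n-1)}-\tfrac{n-2}{2}|\nabla u|^2$ is used with the favorable sign of the gradient term to control \emph{both} $\limsup_r\fint_{B_{2r}\setminus B_r}\Delta u$ and $\fint_{B_{2r}\setminus B_r}|\nabla u|$ simultaneously, while $\fint|\nabla v|^2=O(r^{-2})$ is proved directly from the potential representation. An induction through the mean-value identities for $\Delta^j h$ and $\Delta^j\partial_i h$, together with Liouville for the harmonic functions $\Delta h$ and $\partial_i h$ at the last stage, then forces $h$ to be constant. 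Finally, note that your proposal stops at normality: the corollary's actual conclusions --- the inequality $\tfrac1{c_n}\int Q_g\,dv_g\le 1$ and the isoperimetric-ratio identity --- still require the theorems of Chang--Qing--Yang for normal metrics with finite total $Q$-curvature, which the paper cites explicitly and which you would need to invoke as well.
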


%\begin{rem} If with the additional assumption that it has finite total scalar curvature, we will get a Gauss-Bonnet type theorem as Theorem 1.4. But we are not able to remove the assumption of absolute integrability of scalar curvature because it's a necessary condition for the integration of divergence term in the $Q$-curvature to vanish.\end{rem}

Hereafter $c_n$ denotes the constant $2 ^ {n -2} (\frac {n -2} 2)! \pi ^ {\frac n 2}$. It is the value of the integral of the $Q$-curvature on the unit $n$-hemisphere $\mathbb S^n_+$. $\omega_n$ denotes the volume of unit ball in $\mathbb R^n$.

An orientation preserving homeomorphism $f: \mathbb R^n\rightarrow \mathbb R^n$ is call quasiconformal map if 
$$ \sup_{x \in \mathbb R^n} H(x,f):= \limsup_{r\rightarrow 0+} \sup_{|u-x|=|v-x|=r }\frac{ |f(u)-f(x)|}{|f(v)-f(x)|} <\infty.
$$
We denote the dilation of a quasiconformal map by 
$$H(f):= \sup_{x\in \mathbb R^n} H(x,f)<\infty.$$ We call $f$ is $H$-quasiconformal if $H(f)\leq H$.

In \cite{YW1}, the second author has found a relation between the integral of the $Q$-curvature and the quasiconformal equivalence of two manifolds (metric spaces), and deduce the isoperimetric inequality. This is analogous to Fiala \cite{Fiala}, and Huber \cite{Huber} on two dimensional surfaces with absolutely integrable Gauss curvature.  

\begin{thm}\cite{YW1}
Suppose $(M^n, g)=(\mathbb R^n, e^{2u} |dx|^2)$ is a noncompact complete Riemannian manifold with normal metric. If its $Q$-curvature satisfies
\begin{equation}\int_{M^n}|Q_g|d v_g<\infty
\end{equation}
and 
\begin{equation}\frac{1}{c_n}\int_{M^n} Q_gd v_g<1,
\end{equation}
then there is an $H$-quasiconformal map $f: \mathbb R^n\rightarrow \mathbb R^n$ and constant $C$ such that

\begin{equation}
C^{-1}e^{nu}\leq J_f(x)\leq C e^{nu}\:\: \text{a.e. $x\in\mathbb R^n$},
\end{equation}
where $H$ depends on $n$, and $\alpha$. $C$ depends on the metric $n$ and $g$.

\end{thm}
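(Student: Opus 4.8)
The plan is to use normality to obtain precise asymptotics for $u$ at infinity, and then to compare the conformal volume factor $e^{nu}$ with the Jacobian of an explicit radial quasiconformal map built from those asymptotics. Since the metric is normal, the hypothesis gives the representation
\[
u(x)=\frac{1}{c_n}\int_{\mathbb R^n}\log\frac{|y|}{|x-y|}\,Q_g(y)e^{nu(y)}\,dy+C ,
\]
and I set $\alpha:=\frac{1}{c_n}\int_{\mathbb R^n}Q_g\,dv_g=\frac{1}{c_n}\int_{\mathbb R^n}Q_g(y)e^{nu(y)}\,dy<1$. Writing $\log\frac{|y|}{|x-y|}=\log\frac{|x|\,|y|}{|x-y|}-\log|x|$, the representation splits as $u(x)=-\alpha\log|x|+\psi(x)$, and the first step is to use $\int_{\mathbb R^n}|Q_g|\,dv_g<\infty$ to control the remainder $\psi$, showing that $e^{u(x)}$ is comparable to $|x|^{-\alpha}$ near infinity. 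Intuitively this says the single end of $(\mathbb R^n,g)$ is asymptotically conical with opening governed by $\alpha$.

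With this in hand I would take as model the radial map $f_0(x)=x\,|x|^{-\alpha}$, which sends the Euclidean radius $r=|x|$ to $r^{1-\alpha}$ and is therefore an orientation-preserving homeomorphism of $\mathbb R^n$ exactly when $\alpha<1$. Its differential has radial singular value $(1-\alpha)r^{-\alpha}$ and $(n-1)$-fold angular singular value $r^{-\alpha}$, so its linear dilation is $\max\{1-\alpha,(1-\alpha)^{-1}\}$, depending only on $n$ and $\alpha$, while its Jacobian is $J_{f_0}(x)=(1-\alpha)\,|x|^{-n\alpha}$. By the first step $J_{f_0}\asymp|x|^{-n\alpha}\asymp e^{nu}$ on a neighbourhood of infinity. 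To fix the behaviour on a core I would define $f$ to equal $f_0$ on $\mathbb R^n\setminus B_R$ and to equal the linear conformal dilation $x\mapsto R^{-\alpha}x$ on $B_R$; these agree on $\partial B_R$, so $f$ is a global orientation-preserving homeomorphism whose dilation is the maximum of $1$ and that of $f_0$, still controlled by $n$ and $\alpha$.

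Finally I would check the two-sided bound $C^{-1}e^{nu}\le J_f\le Ce^{nu}$ a.e. On $B_R$ the factor $e^{nu}$ is bounded above and below by compactness and smoothness, while $J_f=R^{-n\alpha}$ is constant, so the bound holds there with a constant depending on $g$; on $\mathbb R^n\setminus B_R$ it reduces to the comparison $J_{f_0}\asymp e^{nu}$ from the first step. The main obstacle is precisely that first step: extracting genuine two-sided pointwise control of $\psi=u+\alpha\log|x|$ from the logarithmic potential of the sign-changing, merely integrable density $Q_ge^{nu}$. This requires handling the kernel's positive singularity at $y=x$ and negative singularity at $y=0$, and, most delicately, controlling the contribution of the region $1\le|y|\lesssim|x|$, which amounts to a moment estimate on $Q_ge^{nu}$; if only sub-logarithmic control $\psi=o(\log|x|)$ were available, the radial model would have to be refined so that its Jacobian still matches $e^{nu}$ within a fixed multiplicative constant.
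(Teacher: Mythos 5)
This theorem is stated in the paper as a citation from \cite{YW1}; the paper itself contains no proof of it, so the only thing to assess is whether your argument would actually work. It would not, and the gap is exactly the one you flag at the end: the ``first step'' is false as stated. Normality together with $\int_{M^n}|Q_g|\,dv_g<\infty$ gives only
\[
u(x)=-\alpha\log|x|+o(\log|x|)\qquad(|x|\to\infty),
\]
not $u(x)=-\alpha\log|x|+O(1)$. The remainder $\psi(x)=\frac{1}{c_n}\int\log\frac{|x||y|}{|x-y|}Q_g e^{nu}\,dy$ contains the term $-\frac{\log|x|}{c_n}\bigl(\alpha c_n-\int_{|y|\le|x|/2}Q_ge^{nu}\,dy\bigr)$ plus $\frac{1}{c_n}\int_{|y|\le |x|/2}\log|y|\,Q_ge^{nu}\,dy$, and no moment condition on $Q_ge^{nu}$ is assumed: a density decaying like $|y|^{-n}(\log|y|)^{-2}$ is integrable but has divergent logarithmic moment, so $\psi$ can tend to infinity (e.g.\ like $\log\log|x|$). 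Consequently $e^{nu}$ is \emph{not} comparable to $|x|^{-n\alpha}$ with a uniform constant, and your explicit radial map $f_0(x)=x|x|^{-\alpha}$ cannot satisfy $C^{-1}e^{nu}\le J_{f_0}\le Ce^{nu}$. Since the correction to the radial profile is genuinely non-radial and only sub-logarithmically controlled, there is no obvious ``refinement of the radial model'' that rescues the construction; the entire difficulty of the theorem lives in this step.

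The actual proof in \cite{YW1} takes a different route: it shows that under the hypotheses the weight $e^{nu}$ is a strong $A_\infty$ weight in the sense of David and Semmes, and then invokes the machinery of Bonk--Heinonen--Saksman (logarithmic potentials and quasiconformal flows) to produce a quasiconformal map whose Jacobian is comparable to a given weight of this type; the condition $\frac{1}{c_n}\int Q_g\,dv_g<1$ enters to guarantee the doubling/$A_\infty$ estimates rather than a pointwise asymptotic for $u$. That framework is robust to the $o(\log|x|)$ indeterminacy, which is precisely what your pointwise approach cannot absorb. (Also note that the parameter $\alpha$ appearing in the dilation bound $H=H(n,\alpha)$ refers to $1-\frac{1}{c_n}\int Q_g\,dv_g$, i.e.\ the gap in the strict inequality, consistent with the statement of Theorem 1.4 in the paper.)
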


In dimension 4, the definition of the $Q$-curvature is
\begin{equation}\begin{split}
Q_g=&\frac{1}{12}(-\Delta_g R_g+\frac{1}{4} R_g^2 -3|E_g|^2)\\
=&-\frac{1}{12}\Delta_g R_g+ 2\sigma_2 (A_g),
\end{split}
\end{equation}
where $E_g$ denotes the traceless part of the Ricci curvature.
$$A_g:= \frac{1}{n-2} (Rc_g- \frac{1}{2(n-1)} R_g g)$$ denotes the Schouten tensor.

Thus in dimension 4, the $Q$-curvature differs from $2 \sigma_2(A_g)$ by a divergence term. By a similar argument as in \cite[Lemma 3.2]{LW}, the integral of
 $\Delta_g R_g$ vanishes if the $Q$-curvature is absolutely integrable and the scalar curvature is in $L^{\frac{n}{2}}$ (which is $L^{2}$). 
From this, we can prove that there is a quasiconformal map from this manifold to the Euclidean space, and the dilation $H$ of the quasiconformal map is controlled by the integral of the $ \sigma_2(A_g)$. This is a new phenomenon regarding conformal invariants. Previously, from the work of Theorem 1.1 and 1.2 in \cite{YW1}, we only know that the integral of the $Q$-curvature may control the asymptotic behavior of the conformally flat manifolds. We state this result in the following theorem.
%the original statement in the preliminary version is wrong.
\begin{thm}\label{thm:quasi}
Let $(M^4,g)=(\mathbb R^4,e^{2u} |dx|^2)$ be a noncompact complete conformally flat metric, satisfying  
$$\int_{M^4}|Q_g|d v_g<\infty, $$
and
\begin{equation}\label{R}
\int_{M^4}|R_{g}|^{2}dv_g<\infty,
\end{equation}
If 
\begin{equation}\label{strictSigma}
\dstyle \frac{1}{2\pi^2}\int_{M^4} \sigma_2 (A_g) dv_g< 1
\end{equation}
 then there is an $H$-quasiconformal map $f$ between $(M^4, g)$ and the Euclidean space (with flat metric). The Jacobian $J_f$ of this map satisfies

\begin{equation}
C^{-1}e^{4u}\leq J_f(x)\leq C e^{4u}\:\: \text{a.e. $x\in\mathbb R^4$}.
\end{equation}
Here $C= C( g)$, and $\dstyle H= H( 1-\frac{1}{2\pi^2} \int_{M^n} \sigma_2 (A_g) dv_g)$.
Moreover, $M^4$ satisfies the isoperimetric inequality:
for any smooth bounded domain $\Omega$,
\begin{equation}
Vol_g( \Omega)^{\frac{3}{4}}\leq C  Area_g(\partial\Omega)
\end{equation}
with $C= C( g)$.
\end{thm}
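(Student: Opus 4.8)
The plan is to deduce Theorem \ref{thm:quasi} by combining Theorem \ref{thm:main} with the quasiconformal result of \cite{YW1} quoted above, after rewriting the smallness hypothesis \eqref{strictSigma} in terms of the total $Q$-curvature. First I would verify that the metric is normal. By hypothesis $\int_{M^4}|Q_g|\,dv_g<\infty$, and since $(R_g^-)^2\le |R_g|^2$ pointwise, the bound \eqref{R} gives $\int_{M^4}(R_g^-)^{2}\,dv_g<\infty$, i.e.\ $R_g^-\in L^{n/2}$ with $n=4$. Thus both hypotheses of Theorem \ref{thm:main} hold, and $g=e^{2u}|dx|^2$ is normal.

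Next I would show that \eqref{strictSigma} is equivalent to $\frac{1}{c_4}\int_{M^4}Q_g\,dv_g<1$. Recall that with $n=4$ one has $c_4=2^{2}\cdot 1!\cdot\pi^2=4\pi^2$. Using the decomposition $Q_g=-\frac{1}{12}\Delta_g R_g+2\sigma_2(A_g)$, it suffices to prove $\int_{M^4}\Delta_g R_g\,dv_g=0$. This is the one genuinely analytic point, and I would establish it by the cutoff and integration-by-parts argument of \cite[Lemma 3.2]{LW}: choosing logarithmic cutoffs supported on dyadic annuli and integrating $\int \eta_j\,\Delta_g R_g\,dv_g$ by parts, the resulting boundary terms are controlled by $\int|Q_g|<\infty$ together with the $L^2$ bound \eqref{R} on $R_g$, and they vanish in the limit. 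Granting this, $\int_{M^4}Q_g\,dv_g=2\int_{M^4}\sigma_2(A_g)\,dv_g$, whence $\frac{1}{c_4}\int Q_g=\frac{1}{2\pi^2}\int\sigma_2(A_g)<1$ by \eqref{strictSigma}.

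With normality and $\frac{1}{c_4}\int Q_g<1$ in hand, the theorem of \cite{YW1} quoted above applies directly and produces the $H$-quasiconformal map $f\colon(M^4,g)\to\mathbb R^4$ with $C^{-1}e^{4u}\le J_f\le Ce^{4u}$. Since the dilation there depends only on $n$ and on $\alpha=\frac{1}{c_4}\int Q_g$, the identification of the previous paragraph yields $H=H\!\left(1-\frac{1}{2\pi^2}\int_{M^4}\sigma_2(A_g)\,dv_g\right)$ and $C=C(g)$.

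Finally I would derive the isoperimetric inequality from this comparison. For a smooth bounded domain $\Omega$, the change-of-variables formula and $e^{4u}\le C J_f$ give $Vol_g(\Omega)=\int_\Omega e^{4u}\,dx\le C\int_\Omega J_f\,dx=C\,Vol_{euc}(f(\Omega))$. Applying the Euclidean isoperimetric inequality to $f(\Omega)$ gives $Vol_{euc}(f(\Omega))^{3/4}\le c_0\,Area_{euc}(\partial f(\Omega))$. The remaining step compares $Area_{euc}(\partial f(\Omega))$ with $Area_g(\partial\Omega)$, and this is where quasiconformality enters: using the $ACL$ property of $f$ and the analytic quasiconformality bound $|Df|^{n}\le K\,J_f$ a.e.\ (with $K=K(H)$), one obtains $|Df|\le (KC)^{1/n}e^{u}$, hence $|Df|^{3}\le C'e^{3u}$, so that $Area_{euc}(\partial f(\Omega))\le\int_{\partial\Omega}|Df|^{3}\,dA_{euc}\le C'\int_{\partial\Omega}e^{3u}\,dA_{euc}=C'\,Area_g(\partial\Omega)$. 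Chaining the three estimates yields $Vol_g(\Omega)^{3/4}\le C\,Area_g(\partial\Omega)$ with $C=C(g)$. I expect the genuine obstacle to be this area comparison on $\partial\Omega$, since it requires controlling the $(n-1)$-dimensional surface measure under $f$ rather than merely its volume Jacobian; the volume comparison and the reduction of \eqref{strictSigma} to a statement about $\int Q_g$ are comparatively routine once \cite[Lemma 3.2]{LW} is invoked.
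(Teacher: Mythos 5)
Your proposal follows the paper's proof essentially verbatim: normality comes from Theorem \ref{thm:main} (noting $(R_g^-)^2\le|R_g|^2$), the vanishing of $\int_{M^4}\Delta_g R_g\,dv_g$ is proved exactly by the cutoff/integration-by-parts argument you describe (the paper's Lemma \ref{vanishing}, with the term $I$ controlled by $\|R_g\|_{L^2}$ and the term $II$ by the normal-metric integral representation of $\nabla u$ over annuli), and then $\frac{1}{c_4}\int Q_g=\frac{1}{2\pi^2}\int\sigma_2(A_g)<1$ feeds into the quoted theorem of \cite{YW1}. The one place you go beyond the paper is the isoperimetric inequality, which the paper simply declares a direct consequence of the quasiconformal map (in \cite{YW1} it comes from the strong $A_\infty$-weight machinery); your direct route has the weak point you yourself flagged, namely that $Df$ exists only a.e.\ in $\mathbb R^4$, so the estimate $\mathrm{Area}_{euc}(f(\partial\Omega))\le\int_{\partial\Omega}|Df|^3\,dA_{euc}$ on a fixed measure-zero hypersurface is not justified as stated --- but since the paper does not reprove that step either, this does not affect the comparison of the two proofs.
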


\begin{rem}
Previously we only know that the integral of the $Q$-curvature may control the asymptotic behavior of the conformally flat manifolds. 
Theorem \ref{thm:quasi} indicates that with suitable integrability assumption of the curvature, the value of the integral of the $\sigma_2 (A_g)$ may also control the asymptotic behavior in a similar manner. This includes all the quasiconformal equivalence results and isoperimetric inequality proved in \cite{YW1}.
\end{rem}

\begin{rem}
One can prove a similar result in higher even dimensional manifolds as well. But there are two differences, which would make the statement of the result more complicated. First, in higher dimensions, the condition \eqref{R} needs to be on the Riemannian curvature tensor
$$\int_{M^n} |Rm|^{\frac{n}{2}} dv_g<\infty.
$$
Second, by the Spyros Aleksakis \cite{Alex1, Alex2} classification theorem of global conformal invariants, $\sigma_2 (A_g)$ in \eqref{strictSigma} should be replaced by the Pfaffian of the Riemannian curvature tensor $Pf(Rm)$ (up to a multiplicative constant). For simplicity, we only state the theorem in dimension 4.
\end{rem}

\begin{rem}
Recall that 
$c_n$ denotes the constant $2 ^ {n -2} (\frac {n -2} 2)! \pi ^ {\frac n 2}$.  $c_4 = 4\pi^2$. The constant appeared on the left hand side of \eqref{strictSigma} is equal to $\frac{2}{c_4}$.
\end{rem}
\hide{The Chang-Qing-Yang theorem asserts that for $4$-manifolds (in fact, their theorem is valid for all even dimensional manifolds) which is conformal to the Euclidean space, the integral of the $Q$-curvature controls the asymptotic isoperimetric ratio at the end of this complete manifold.
This is analogous to the two-dimensional result by
Cohn-Vossen \cite{Cohn-Vossen}, %\query{the 2d result of equation (1.4) is proved by Hartman~\cite{Hartman} I have already put it in the reference section, if you don't think so, just delete it. }
 who studied the Gauss-Bonnet integral for a noncompact complete surface $M^2$ with analytic
metric, and showed that if the manifold has finite total Gaussian curvature, then
\begin{equation}\label{1.1}
\dstyle \frac{1}{2\pi}\int_{M} K_g dv_g\leq  \chi(M),
\end{equation}
where $\chi(M)$ is the Euler characteristic of $M$. Later, Huber \cite{Huber} and Hartman \cite{Hartman} extended this inequality to
metrics with much weaker regularity. Huber also proved that such a surface
$M$ is conformally equivalent to a closed surface with finitely many points removed. 

\begin{def}\label{normal}
The metric is normal on an end $E_j \subset M ^ n$ of a locally conformally flat manifold  $M^n$ if
$(E_j,g)=(\mathbb{R}^n\setminus  B, e^{2w}|dx|^2)$ and
$$w(x)=\frac{1}{c_n}\int_{\mathbb{R}^n\setminus B}\log\frac{|y|}{|x-y|}P(y)dy+C
$$
for some continuous and integrable  function $P(y)$. The dimensional constant $c_n:= 2 ^ {n -2} (\frac {n -2} 2)! \pi ^ {\frac n 2}$ is the value that appears in the fundamental solution equation 
\begin{equation}\label{1}
(-\Delta)^{n/2}\log\frac{1}{|x|}=c_n\delta_0(x),
\end{equation}
where $\Delta$ is the Laplacian on Euclidean space.

\end{def}

Consider the conformally flat metric of even dimensional manifold $(M^n, g)= (\mathbb R^n,e^{2u} |dx|^2)$ where $n=2k,k\in\mathbb N$. We denote by $Q_{g}$ the Q-curvature of the metric $g$, which is an $n$-th order curvature operator. The total Q-curvature $\int_{\mathbb R^n}Q_{g}(x)dv_g$ is a conformal invariant. 

In dimension 4, the Q-curvature has the explicit expression
\begin{equation}
Q_g=\frac{1}{12}(-\Delta_g R_g+\frac{1}{4}R_g^2-3|E_g|^2)
\end{equation}
where $R_g$ is the scalar curvature and $E_g$ is the traceless part of the Ricci curvature.

The invariance of Q-curvature for a closed 4-manifold $(M^4,g)$ is actually related to the Chern-Gauss-Bonnet formula
\begin{equation}
\chi(M^4)=\frac{1}{4\pi^2}\int_M(\frac{|W_g|^2}{8}+Q_g)dv_g
\end{equation}
where $W_g$ is the Weyl tensor of $g$.

For non-compact case, Chang-Qing-Yang proved in \cite{CQY1} that if a non-compact complete conformally flat 4-manifold $(\mathbb R^4,g=e^{2u}  |dx|^2)$ has finite total Q-curvature, i.e. 
\begin{equation}
\int_{M^4}|Q_{g}|dv_g<\infty,
\end{equation} 
and that the metric is normal, i.e. 
\begin{equation}
u(x)=\frac{1}{4\pi^2}\int_{\mathbb R^4}\log\frac{|y|}{|x-y|}Q_g(y)e^{4u(y)}dy+C,
\end{equation}
then
\begin{equation}
\frac{1}{4\pi^2}\int_{\mathbb{R}^4}Q_{g} e^{4u(x)}dx\leq\chi(\mathbb R^4)=1
\end{equation}
and
\begin{equation}
\chi(\mathbb R^4)-\frac{1}{4\pi^2}\int_{\mathbb{R}^4}Q_{g}(x)e^{4u(x)}dx=\lim_{r\rightarrow\infty}\frac{(\mathrm{Vol}_g(\partial B_r))^{\frac{4}{3}}}{4(2\pi^2)^{\frac{1}{3}}\mathrm{Vol}_g(B_r)}
\end{equation}
where $B_r$ is the ball of radius $r$ and the right hand side is the isoperimetric ratio on the conformally flat end.

In the same paper, they showed that non-negativity of the scalar curvature $R_g$ will imply the normality of the metric, thus the theorem will still holds. It was pointed out in \cite{LW} that the above argument holds for all even-dimensional conformally flat manifolds with the same assumption. 

Recently, Lu-Wang in \cite{LW} proved that

\begin{thm}(Theorem 1.5 of \cite{LW})\label{lw1}
Let $(M^n,g)$ be an even dimensional locally conformally flat complete manifold with finite total Q-curvature and finitely many conformally flat simple ends. Suppose that on each end, the metric is normal. If $M^n$ is immersed in $\mathbb R^{n+1}$ satisfying
\begin{equation}
\int_{M^n}|L|^ndv_g<\infty
\end{equation}
with $L$ being the second fundamental form, then
\begin{equation}
\int_{M^n}Q_gdv_g\in 2c_n\mathbb Z
\end{equation}
where $c_n=2^{n-2}(\frac{n-2}{2})!\pi^{\frac{n}{2}}$ is the integral of the Q-curvature on the standard n-hemisphere $\mathbb S^n_+$.
\end{thm}

The above theorem should be compared with the quantization of total scalar curvature for immersed surfaces  in \cite{White}.

The normality of metric on the end is a natural condition for quantization, it will give control of the asymptotic behavior for metrics at infinity.

In this note, we want to generalize the result of \cite{CQY1}. We prove that the metric is normal if the negative part of the scalar curvature is $L^{n/2}$ integrable.

% by loosening the condition on non-negativity of scalar curvature to only assuming an integrability condition on the negative part of the scalar curvature.

}

\noindent \textbf{Acknowledgments:} The second author would like to thank Matt Gursky for the question regarding Theorem 1.3, and inspiring discussions during the 2015 Princeton-Tokyo conference on geometric analysis.

\section{Normality of the conformally flat metric}

We only need to prove that the integrability of the negative part of the scalar curvature implies the normality of metric on the end.

\begin{defn}
We call a metric $(\mathbb R^n,e^u\delta_{ij})$ normal if it satisfies
\begin{equation}
u(x)=\frac{1}{c_n}\int_{\mathbb R^n}\log{\frac{|y|}{|x-y|}   }Q_g(y)e^{nu(y)}dy+C
\end{equation}
for some constant $C$ and $c_n$ is a dimensional constant.
\end{defn}

Let $v(x):=\frac{1}{c_n}\int_{\mathbb R^n}\log{\frac{|y|}{|x-y|}   }Q_g(y)e^{nu(y)}dy$ and $h(x):=u(x)-v(x)$. We want to show that $h$ is constant function.

By the conformal transformation of Q-curvature and the fact that $\log\frac{1}{|x|}$ is a fundamental solution of the $\frac{1}{c_n}(-\Delta)^{\frac{n}{2}}$, we have
\begin{equation}\label{1}
(\Delta)^\frac{n}{2}h=(\Delta)^kh=0.
\end{equation}

Moreover, we can use the scalar curvature equation and the integrability condition to get an asymptotic decay of $\Delta h$.

\begin{lem}\label{l1}
With the same assumptions as in Theorem \ref{thm:main}, we have
\begin{equation}\label{2}
\limsup_{r\rightarrow\infty}\fint_{B_r}\Delta h=\limsup_{r\rightarrow\infty}\fint_{B_{2r}\setminus B_r}\Delta h\leq0.
\end{equation}
\end{lem}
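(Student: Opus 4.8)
The plan is to study $\Delta h=\Delta u-\Delta v$ by estimating the two pieces separately: I use the conformal transformation law for the scalar curvature to bound $\Delta u$ from above, I use the explicit potential representation of $v$ to bound $\Delta v$, and I then invoke \eqref{1} (polyharmonicity of $h$) to upgrade the resulting one-sided decay into the asserted equality of the two $\limsup$'s.

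First, for $g=e^{2u}|dx|^2$ the conformal change formula gives
\[
\Delta u = -\frac{1}{2(n-1)}R_g e^{2u} - \frac{n-2}{2}|\nabla u|^2 \le \frac{1}{2(n-1)}R_g^- e^{2u},
\]
where I discarded the nonpositive gradient term and used $-R_g\le R_g^-$. Writing $R_g^- e^{2u}=\big((R_g^-)^{n/2}e^{nu}\big)^{2/n}$, averaging over $B_r$, and applying H\"older's inequality with exponents $\frac n2$ and $\frac{n}{n-2}$ together with the hypothesis \eqref{totalR} and $dv_g=e^{nu}dx$ yields
\[
\fint_{B_r} R_g^- e^{2u}\,dx \le \frac{1}{\omega_n r^n}\Big(\int_{\R^n}(R_g^-)^{n/2}\,dv_g\Big)^{2/n}(\omega_n r^n)^{\frac{n-2}{n}} = O(r^{-2}),
\]
so $\fint_{B_r}\Delta u\le O(r^{-2})$; the same bound holds with $B_r$ replaced by the annulus $B_{2r}\setminus B_r$, since $\int_{B_{2r}\setminus B_r}\le\int_{B_{2r}}$ and $|B_{2r}\setminus B_r|=(2^n-1)\omega_n r^n$.

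Second, differentiating the definition of $v$ under the integral sign and using $\Delta_x\log|x-y|=(n-2)/|x-y|^2$ gives
\[
\Delta v(x) = -\frac{n-2}{c_n}\int_{\R^n}\frac{Q_g(y)e^{nu(y)}}{|x-y|^2}\,dy .
\]
By Fubini and the elementary uniform kernel bound $\fint_{B_r}|x-y|^{-2}\,dx\le Cr^{-2}$ for all $y$ (checked by splitting into the cases $|y|\le 2r$, where $\int_{B_r}|x-y|^{-2}dx\le\int_{B_{3r}(0)}|z|^{-2}dz\lesssim r^{n-2}$, and $|y|>2r$, where $|x-y|\ge|y|/2$), combined with $\int|Q_g|\,dv_g<\infty$, I obtain $\fint_{B_r}\Delta v=O(r^{-2})\to0$, and likewise on the annulus. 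Adding the two estimates gives $\fint_{B_r}\Delta h\le Cr^{-2}$, but \emph{only} an upper bound, since the $-\tfrac{n-2}{2}|\nabla u|^2$ term provides no lower control.

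Finally, to convert this one-sided decay into the equality, I use \eqref{1}: since $\Delta^k h=0$, the function $w:=\Delta h$ is a smooth entire solution of $\Delta^{k-1}w=0$. Because the spherical average commutes with the radial Laplacian, $\phi(r):=\fint_{\partial B_r}w$ solves $\big(\partial_r^2+\tfrac{n-1}{r}\partial_r\big)^{k-1}\phi=0$; smoothness at the origin kills all singular solutions and forces $\phi$, hence $a(r):=\fint_{B_r}\Delta h$, to be a polynomial in $r^2$. A polynomial in $r^2$ that is bounded above by $Cr^{-2}$ must tend to a limit $L\in[-\infty,0]$, so $\limsup_{r}a(r)=\lim_r a(r)=L\le 0$. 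The identity $2^n a(2r)=a(r)+(2^n-1)\fint_{B_{2r}\setminus B_r}\Delta h$ then forces the annulus average to converge to the same $L$, giving both the equality and the bound $\le 0$. I expect the last step to be the main obstacle: the curvature estimates alone yield only an upper bound on $\fint_{B_r}\Delta h$, so without the polyharmonic rigidity from \eqref{1} one could not even conclude that the ball average converges, much less that it agrees with the annulus average.
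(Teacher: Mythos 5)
Your proof is correct, and it differs from the paper's in two places worth recording. For $\fint \Delta u$ you argue exactly as the paper does: conformal scalar curvature equation, discard the gradient term, H\"older against \eqref{totalR}. For $\fint \Delta v$ the paper instead integrates by parts to reduce to boundary integrals $\int_{\partial B_r}|\nabla v|$ and then runs a Cauchy--Schwarz estimate on spheres using the kernel $\frac{x-y}{|x-y|^2}$; your route --- computing $\Delta v(x)=-\frac{n-2}{c_n}\int |x-y|^{-2}Q_g e^{nu(y)}\,dy$ distributionally and combining Fubini with the uniform bound $\fint_{B_r}|x-y|^{-2}\,dx\le Cr^{-2}$ --- is more direct, avoids restricting the potential to spheres, and gives the same $O(r^{-2})$ decay on both balls and annuli. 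The more substantial difference is your final step: the paper's proof only establishes $\limsup_{r\to\infty}\fint_{B_{2r}\setminus B_r}\Delta h\le 0$ (the identical estimates give the ball version), and it never actually addresses the asserted \emph{equality} of the two limsups in \eqref{2}. Your Pizzetti-type observation --- that $\Delta^k h=0$ forces $\fint_{\partial B_r}\Delta h$, hence $\fint_{B_r}\Delta h$, to be a polynomial in $r^2$, so that the one-sided bound $\le Cr^{-2}$ upgrades to convergence to some $L\in[-\infty,0]$ and the identity $2^n a(2r)=a(r)+(2^n-1)\fint_{B_{2r}\setminus B_r}\Delta h$ transfers that limit to the annulus average --- is what actually proves the ``$=$'' in the lemma as literally stated. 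That extra rigidity is not used downstream (the main theorem only invokes the two $\le 0$ statements separately), but your argument is the more complete one for the statement as written.
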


\begin{proof}
By the scalar curvature equation of conformally flat manifold, we have
\begin{equation}
\begin{split}
&\int_{B_{2r}\setminus B_r}\Delta u\\
=&\int_{B_{2r}\setminus B_r}-\frac{R_ge^{2u} }{2(n-1) }-\frac{n-2}{2}|\nabla u|^2\\
\leq&\int_{B_{2r}\setminus B_r}\frac{R_g^-e^{2u}}{2(n-1)}\\
\leq& \frac{1}{2(n-1)}(\int_{B_{2r}\setminus B_r}(R_g^-)^{\frac{n}{2}}e^{nu(x)}dx)^{\frac{2}{n}}(\mathrm{Vol_g}(B_{2r}\setminus B_r))^{\frac{n-2}{n}}\\
=&C(n)r^{n-2}.\:(\text{By (\ref{totalR})})
\end{split}
\end{equation}

So
\begin{equation}
\limsup_{r\rightarrow\infty}\fint_{B_{2r}\setminus B_r}\Delta u\leq O(r^{-2})\rightarrow0.
\end{equation}

For $v$, we do integration by part
\begin{equation}
\begin{split}
&|\int_{B_{2r}\setminus B_r}\Delta v|\\
=&|\int_{\partial B_{2r}}\nabla v\cdot\nu-\int_{\partial B_r}\nabla v\cdot\nu |\\
\leq& C(n) (\int_{\partial B_{2r}}|\nabla v|+\int_{\partial B_r}|\nabla v|).
\end{split}
\end{equation}
where $\nu$ is the outer-normal vector field on the boundary.

Notice that
\begin{equation}
\begin{split}
\int_{\partial B_r}|\nabla v|
\leq& (\int_{\partial B_r}|\nabla v|^2d\sigma_r)^{\frac{1}{2}}\cdot(\mathrm{Vol_g}(\partial B_r))^{\frac{1}{2}}\\
=&C(n)r^{\frac{n-1}{2}}(   \int_{\partial B_r}          | \int_{\mathbb R^n} \frac{x-y}{|x-y|^2}Q_{g}e^{nu(y)}dy     |^2               dx    )^{\frac{1}{2}}\\
\leq& C(n)r^{\frac{n-1}{2}}\{  \int_{\partial B_r} [(\int_{\mathbb R^n} |\frac{1}{|x-y|^2}Q_ge^{nu(y)}|dy )(\int_{\mathbb R^n}|Q_g|dv_g)]dx      \}^{\frac{1}{2}}\\
\leq &C(n)r^{\frac{n-1}{2}}\{  \int_{\partial B_r} (\int_{\mathbb R^n} |\frac{1}{|x-y|^2}Q_ge^{nu(y)}|dy )dx      \}^{\frac{1}{2}}\\
= &C(n)r^{\frac{n-1}{2}}\{  \int_{\mathbb R^n}Q_ge^{nu(y)} (\int_{\partial B_r} |\frac{1}{|x-y|^2}|dx )dy      \}^{\frac{1}{2}}\\
\leq &C(n)r^{\frac{n-1}{2}}\{  \int_{\mathbb R^n}Q_ge^{nu(y)} (\int_{\partial B_r,y\in\partial B_r} |\frac{1}{|x-y|^2}|dx )dy      \}^{\frac{1}{2}}\\
= &C(n)r^{\frac{n-1}{2}}\{  \int_{\mathbb R^n}Q_ge^{nu(y)} dy      \}^{\frac{1}{2}} (\int_{\partial B_r,y\in\partial B_r} |\frac{1}{|x-y|^2}|dx )^{\frac{1}{2}}\\
\leq&C'(n)r^{\frac{n-1}{2}} (r^{n-3})^{\frac{1}{2}}\\
=&O(r^{n-2}).
\end{split}
\end{equation}

So
\begin{equation}\label{v}
|\fint_{B_{2r}\setminus B_r}\Delta v|= O(r^{-2})\rightarrow0.
\end{equation}

Combining the above we get
\begin{equation}
\limsup_{r\rightarrow\infty}\fint_{B_{2r}\setminus B_r}\Delta h=\limsup_{r\rightarrow\infty}\fint_{B_{2r}\setminus B_r}\Delta u-\Delta v\leq0.
\end{equation}

\end{proof}

The next lemma was proved in \cite{LW} in dimension 4, but the same argument works in all dimensions.

\begin{lem}\label{l2}
With the same assumptions in Theorem \ref{thm:main}, we have
\begin{equation}
(\fint_{B_{2r}\setminus B_r}|\nabla v|)^2\leq\fint_{B_{2r}\setminus B_r}|\nabla v|^2=O(r^{-2}).
\end{equation}
\end{lem}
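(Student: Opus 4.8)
The first inequality is simply the Cauchy--Schwarz (Jensen) inequality with respect to the normalized measure $\frac{1}{\mathrm{Vol}(B_{2r}\setminus B_r)}\,dx$ on the annulus, so the entire content of the lemma is the estimate $\fint_{B_{2r}\setminus B_r}|\nabla v|^2=O(r^{-2})$. The plan is to differentiate the defining integral for $v$ under the integral sign, obtaining
$$\nabla v(x)=-\frac{1}{c_n}\int_{\mathbb R^n}\frac{x-y}{|x-y|^2}Q_g(y)e^{nu(y)}\,dy,$$
so that $|\nabla v(x)|\le \frac{1}{c_n}\int_{\mathbb R^n}\frac{1}{|x-y|}|Q_g(y)|e^{nu(y)}\,dy$. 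This is exactly the integrand that already appears inside the Cauchy--Schwarz step in the proof of Lemma \ref{l1}, so the computation will run in close parallel.

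Next I would apply Cauchy--Schwarz to $|\nabla v(x)|^2$, splitting the measure $|Q_g|e^{nu}\,dy$ as in Lemma \ref{l1}:
$$|\nabla v(x)|^2\le \frac{1}{c_n^2}\Big(\int_{\mathbb R^n}\frac{1}{|x-y|^2}|Q_g(y)|e^{nu(y)}\,dy\Big)\Big(\int_{\mathbb R^n}|Q_g(y)|e^{nu(y)}\,dy\Big),$$
where the second factor is finite by the total $Q$-curvature hypothesis $\int_{M^n}|Q_g|\,dv_g<\infty$ and therefore contributes only a constant. Integrating over the annulus and invoking Fubini, the problem reduces to bounding the kernel integral $\int_{B_{2r}\setminus B_r}\frac{1}{|x-y|^2}\,dx$ uniformly in $y$.

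The key step, and the main technical point, is the uniform bound $\sup_{y\in\mathbb R^n}\int_{B_{2r}\setminus B_r}\frac{1}{|x-y|^2}\,dx\le C(n)\,r^{n-2}$, which I would prove by splitting into two cases. When $|y|\le 4r$ the annulus lies inside $B_{6r}(y)$, and since $n\ge4$ the singularity $|x-y|^{-2}$ is integrable, giving $\int_{B_{6r}(y)}|x-y|^{-2}\,dx=C(n)\int_0^{6r}s^{n-3}\,ds\le C(n)r^{n-2}$. When $|y|>4r$ one has $|x-y|\ge |y|-2r\ge |y|/2$ for every $x\in B_{2r}$, whence $\int_{B_{2r}\setminus B_r}|x-y|^{-2}\,dx\le 4|y|^{-2}\,\mathrm{Vol}(B_{2r})\le C(n)r^n/|y|^2\le C(n)r^{n-2}$. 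Feeding this back and using the Euclidean volume $\mathrm{Vol}(B_{2r}\setminus B_r)=C(n)r^n$, I obtain $\int_{B_{2r}\setminus B_r}|\nabla v|^2\,dx\le C(n)\,r^{n-2}\int_{M^n}|Q_g|\,dv_g=O(r^{n-2})$, and dividing by the volume $\sim r^n$ yields the claimed $O(r^{-2})$. The only delicate point is the case analysis for the kernel integral, and it is genuinely where the dimension $n\ge 4$ enters: for $n=2$ the singular part is only logarithmically bounded, which is consistent with the two-dimensional situation being handled separately by Huber's theorem.
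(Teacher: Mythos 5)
Your proposal is correct and follows essentially the same route as the paper: differentiate under the integral sign, apply Cauchy--Schwarz with respect to the measure $|Q_g|e^{nu}\,dy$ to split off the finite total $Q$-curvature factor, use Fubini, and reduce to the uniform kernel bound $\int_{B_{2r}\setminus B_r}|x-y|^{-2}\,dx=O(r^{n-2})$. Your two-case argument for that kernel bound is in fact more carefully justified than the paper's, which simply asserts that the worst case occurs for $y\in\partial B_r$ and compares with $\int_{B_{3r}}|x|^{-2}\,dx$.
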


\begin{proof}
The first inequality is just H{\"o}lder's inequality. We only need to prove the order of decay for $\fint_{B_{2r}\setminus B_r}|\nabla v|^2$.

\begin{equation}
\begin{split}
\int_{B_{2r}\setminus B_r}|\nabla v|^2
&=\int_{B_{2r}\setminus B_r} | \int_{\mathbb R^n} \frac{x-y}{|x-y|^2}Q_{g}e^{nu(y)}dy     |^2   dx\\
&\leq\int_{B_{2r}\setminus B_r}| \int_{\mathbb R^n} \frac{1}{|x-y|^2}Q_{g}e^{nu(y)}dy     || \int_{\mathbb R^n} Q_{g}e^{nu(y)}dy     |dx\\
&\leq C(n)\int_{B_{2r}\setminus B_r}\int_{\mathbb R^n}|\frac{1}{|x-y|^2}Q_{g}e^{nu(y)}|dydx\\
&= C(n)\int_{\mathbb R^n}\int_{B_{2r}\setminus B_r}|\frac{1}{|x-y|^2}Q_{g}e^{nu(y)}|dxdy\\
&\leq C(n) \int_{\mathbb R^n} |Q_{g}e^{nu(y)}    |( \int_{B_{2r}\setminus B_r}\frac{1}{|x-y|^2}dx) dy\\
&\leq C(n) \int_{\mathbb R^n} |Q_{g}e^{nu(y)}    |( \int_{B_{2r}\setminus B_r,y\in \partial B_r}\frac{1}{|x-y|^2}dx) dy\\
&\leq C(n) \int_{\mathbb R^n} |Q_{g}e^{nu(y)}    | (\int_{B_{3r}}\frac{1}{|x|^2}dx)dy\\
&=O(r^{n-2}).
\end{split}
\end{equation}

Thus by taking average we proved the lemma.

\end{proof}

We're now ready to prove the main theorem. We denote by $\omega_n$ the volume of unit ball in $\mathbb R^n$. The area of the unit sphere in $\mathbb R^n$ is then equal to $n\omega_n$.

\begin{proof} of Theorem \ref{thm:main} From (\ref{1}), we have $\Delta^{k}h=0$ and $\lim_{r\rightarrow\infty}\sup\fint_{B_r}\Delta h\leq0$. As the first step, we will prove $\Delta^{k-1}h=0$.

By the mean value property of harmonic functions, for any $p\in\mathbb R^{2k}$, 
\begin{equation}
\begin{split}
\Delta^{k-1}h(p)
&=\frac{1}{\omega_nr^n}\int_{B_r(p)}\Delta^{k-1}h(x)dx\\
&=\frac{1}{\omega_nr^n}\int_{\partial B_r(p)}\partial_r\Delta^{k-2}h(x)\cdot\nu dx\\
&=\frac{n\omega_n}{\omega_nr}\fint_{\partial B_r(p)}\partial_r\Delta^{k-2}h(x)\cdot\nu dx\\
&=\frac{n}{r}\partial_r\fint_{\partial B_r(p)}\Delta^{k-2}h(x)dx.\\
\end{split}
\end{equation}
Namely
\begin{equation}
\frac{r}{n}\Delta^{k-1}h(p)\leq \partial_r\fint_{\partial B_r(p)}\Delta^{k-2}h(x)dx.
\end{equation}
Integrating the above equation, we get
\begin{equation}
\begin{split}
&\frac{1}{2n}r^2\Delta^{k-1}h(p)+\Delta^{k-2}h(p)\leq\fint_{\partial B_r(p)}\Delta^{k-2}h(x) dx\\
&\frac{\omega_n}{2}r^{n+1}\Delta^{k-1}h(p)+n\omega_nr^{n-1}\Delta^{k-2}h(p)\leq\int_{\partial B_r(p)}\Delta^{k-2}h(x) dx.
\end{split}
\end{equation}
Integrating again both sides, we get
\begin{equation}
\begin{split}
&\frac{\omega_n}{2(n+2)}r^{n+2}\Delta^{k-1}h(p)+\omega_nr^{n}\Delta^{k-2}h(p)\\
\leq&\int_{B_r(p)}\Delta^{k-2}h(x) dx\\
=&\int_{\partial B_r}\nabla_r\Delta^{k-3}h(x)\cdot\nu\\
=&n\omega_nr^{n-1}\fint_{\partial B_r}\nabla_r\Delta^{k-3}h(x)\cdot\nu\\
=&n\omega_nr^{n-1}\partial_r\fint_{\partial B_r}\Delta^{k-3}h(x).\\
\end{split}
\end{equation}
We can rewrite this inequality in the way that
\begin{equation}
\frac{1}{2n(n+2)}r^{3}\Delta^{k-1}h(p)+\frac{1}{n}r\Delta^{k-2}h(p)\leq\partial_r\fint_{\partial B_r}\Delta^{k-3}h(x).\end{equation}
Integrating both sides in $r$, we obtain
\begin{equation}
\frac{1}{8n(n+2)}r^{4}\Delta^{k-1}h(p)+\frac{1}{2n}r^2\Delta^{k-2}h(p)+\Delta^{k-3}h(p)\leq \fint_{\partial B_r}\Delta^{k-3}h(x).
\end{equation}

Keep doing this procedure finitely many times. Then we get
\begin{equation}\label{3}
\begin{split}
&a_{k-1}r^{2(k-2)}\Delta^{k-1}h(p)+a_{k-2}r^{2(k-3)}\Delta^{k-2}h(p)+\cdots+a_1\Delta h(p)\\
\leq&\fint_{\partial B_r}\Delta h(x),
\end{split}\end{equation}
where
\begin{equation}
\begin{split}
a_{k-1}&=\frac{1}{[2\cdot4\cdots(2k-4)]\cdot[2k(2k+2)(2k+4)\cdots(4k-6)]}\\
a_{k-2}&=\frac{1}{[2\cdot4\cdots(2k-6)]\cdot[(2k)(2k+2)\cdots(4k-8)]}\\
&\cdots\\
a_{k-j}&=\frac{1}{[2\cdot4\cdots(2(k-j)-2)]\cdot[(2k)(2k+2)\cdots(4(k-1)-2j)]}\\
&...\\
a_2&=\frac{1}{2\cdot2k}\\
a_1&=1.
\end{split}
\end{equation}
Each $a_i$, for $i=1,\cdots k-1$, is a positive constant.

By (\ref{2}),  the leading cooefficient $a_{k-1}\Delta^{k-1}h(p)$ of the polynomial (\ref{3}) must be non-positive. Since $a_{k-1}>0$, we have
\begin{equation}
\begin{split}
\Delta^{k-1}h(x)=\mathrm{constant}\leq0\\
\Delta^{k-1}\partial_ih(x)=0
\end{split}
\end{equation}
for every $i=1,\cdots,n$.

Applying the mean value property to $\Delta^{k-2}\partial_ih$ and integrating by parts as above, we have
\begin{equation}\label{4}
\begin{split}
&a_{k-1}r^{2(k-2)}\Delta^{k-2}\partial_ih(p)+a_{k-2}r^{2(k-3)}\Delta^{k-3}\partial_ih(p)+\cdots+a_1\partial_i h(p)\\
\leq&\fint_{\partial B_r}\partial_ih(x).
\end{split}
\end{equation}

Next, we make use the scalar curvature equation to see  
\begin{equation}
\begin{split}
&\limsup_{r\rightarrow\infty}\fint_{B_{2r}\setminus B_r}\Delta u+\fint_{B_{2r}\setminus B_r}\frac{n-2}{2}|\nabla u|^2\\
=&\limsup_{r\rightarrow\infty}\fint_{B_{2r}\setminus B_r}-\frac{R_ge^{2u}}{2(n-1)}
\leq0.
\end{split}
\end{equation}

By Holder's inequality, 
\begin{equation}
\begin{split}
&\limsup_{r\rightarrow\infty}\fint_{B_{2r}\setminus B_r}\Delta u+\frac{n-2}{2}(\fint_{B_{2r}\setminus B_r}|\nabla u|)^2\leq0.
\end{split}
\end{equation}

Combine the estimates in Lemma \ref{l1} and Lemma \ref{l2}. Then we have
\begin{equation}\label{5}
\begin{split}
&\limsup_{r\rightarrow\infty}\fint_{B_{2r}\setminus B_r}\Delta h+\frac{n-2}{2}(\fint_{B_{2r}\setminus B_r}|\nabla h|)^2\leq0.
\end{split}
\end{equation}
Thus
\begin{equation}
\begin{split}
&\limsup_{r\rightarrow\infty}\fint_{B_{2r}\setminus B_r}\Delta h+\frac{n-2}{2}(\fint_{B_{2r}\setminus B_r}|\partial_i h|)^2\leq0
\end{split}
\end{equation}
for each $i=1,...,n$.

Plugging (\ref{3}) and (\ref{4}) in (\ref{5}), we have a polynomial $P_i(r)$  satisfying
\begin{equation}
\limsup_{r\rightarrow\infty}P_i(r)\leq0.
\end{equation}
So the leading coefficient of $P_i(r)$ must be non-positive. Namely
\begin{equation}
a_{k-1}^2|\Delta^{k-2}\partial_ih(p)|^2\leq0
\end{equation}
for each $p\in\mathbb R^n$. 

This implies that
\begin{equation}\label{intermediate}
\Delta^{k-2}\partial_ih=0
\end{equation}
for each $i=1,...,n$, and thus
\begin{equation}
\Delta^{k-1}h=0.
\end{equation}

Now we reduce the problem to
\begin{equation}
\begin{split}
&\Delta^{k-1}h=0\\
&\text{with}\\
&\limsup_{r\rightarrow\infty}\fint_{B_r}\Delta h\leq0
\end{split}
\end{equation}

As an intermediate step, we obtain (\ref{intermediate}) $\Delta^{k-2}\partial_ih=0$.

We can apply this argument inductively to get
\begin{equation}
\begin{split}
\Delta^{k-3}\partial_ih=0,\:\text{for each $i$}\\
\Delta^{k-2}h=0.
\end{split}
\end{equation}
After finitely many steps, we get
\begin{equation}\label{6}
\Delta\partial_ih=0
\end{equation}
for each $i=1,...,n$, and thus
\begin{equation}
\Delta^2h=0.
\end{equation}

Notice that the induction argument will make use of (\ref{2}), so we cannot obtain $\Delta h=0$ directly from the induction.

However, since $\Delta h$ is harmonic, by Liouville's theorem and (\ref{2}), we have
\begin{equation}
\Delta h=C_0\leq0.
\end{equation}

Argue as in \cite{CQY1}
\begin{equation}\label{7}
\begin{split}
&\limsup_{r\rightarrow\infty}\fint_{B_{2r}\setminus B_r}|\nabla h|^2\\
\leq &\limsup_{r\rightarrow\infty}\fint_{B_{2r}\setminus B_r} (2|\nabla u|^2+2|\nabla v|^2)\\
= &\limsup_{r\rightarrow\infty}\fint_{B_{2r}\setminus B_r} -\frac{4}{n-2}\Delta u-\frac{2R_g^-e^{2u}}{(n-1)(n-2)}+2|\nabla v|^2\\
=& \limsup_{r\rightarrow\infty}\fint_{B_{2r}\setminus B_r} -\frac{4}{n-2}\Delta h \\
&+ \limsup_{r\rightarrow\infty}\fint_{B_{2r}\setminus B_r}  \frac{4}{n-2}\Delta v -\frac{2R_g^-e^{2u}}{(n-1)(n-2)}+2|\nabla v|^2\\
\leq &-\frac{4}{n-2}C_0+0\\
=&-\frac{4}{n-2}C_0.
\end{split}
\end{equation}
In the last inequality we used Lemma \ref{l1}, Lemma \ref{l2} and the integrability of the negative part of scalar curvature.

(\ref{6}) tells us that $\partial_i h$ is harmonic for each $i=1,...,n$. By Liouville's theorem again, we have
\begin{equation}
\partial_i h=\mathrm{constant},
\end{equation}
and thus
\begin{equation}
\Delta h=0.
\end{equation}
$C_0$ can be chosen to be $0$. So $\partial_i h=0$ for each $i=1,...,n$.
Therefore we have proved that $h$ is a constant. This concludes the proof that the metric is normal.

Now that the metric is normal. By Theorem 1.1 and Theorem 1.3 of \cite{CQY1}, we have (\ref{totalQ}).
\end{proof}

\section{Quasiconformal map and the isoperimetric inequality}

\begin{proof} of Theorem \ref{thm:quasi}
In this theorem, we use the $L^{2}$-integrability of the scalar curvature in two ways. We first need it to apply Theorem \ref{thm:main} to show that the metric is normal. Also we need it to prove that 
the divergence term in the $Q$-curvature vanishes.

Since \eqref{R} holds, Theorem \ref{thm:main} implies that the metric is normal. 
Suppose in addition that the strict inequality holds in \eqref{totalQ}, i.e. 
\begin{equation}\label{strictQ}
\dstyle \frac{1}{c_4}\int_{M^4} Q_g dv_g <  \chi(\mathbb{R}^{4} )=1.
\end{equation}
Then it was already proved by the second author in \cite{YW1} that there exists an $H$-quasiconformal map between $M^4$ and the Euclidean space with Jacobian comparable to the volume form $e^{4u}$.
Thus the problem reduces to prove \eqref{strictQ}.
For this purpose, we first see that 
\begin{equation}
Q_g=-\frac{1}{12}\Delta_g R_g+ 2\sigma_2 (A_g).
\end{equation}
In \cite[Lemma 3.2] {LW}, it was shown that
if the metric is normal and the second fundamental form $L$ of the isometric embedding $M^4 \hookrightarrow \mathbb R^5$ satisfies
\begin{equation}\label{L}
\int_{M^4}|L|^{4}dv_g<\infty,
\end{equation}
then 
\begin{equation}\label{R2} \int_{M^4} \Delta_g R_g dv_g =0.\end{equation}
We can adopt a similar method to prove \eqref{R2}
under the assumption \eqref{R} 
\begin{equation*}
\int_{M^4}|R_g|^{2}dv_g<\infty.
\end{equation*}
For completeness we give the proof of \eqref{R2} in the following.

\begin{lem}\label{vanishing}
Let $(M^4,g)=(\mathbb R^4,e^{2u} |dx|^2)$ be a noncompact complete conformally flat metric, satisfying  
$$\int_{M^4}|Q_g|d v_g<\infty, $$
\begin{equation}
\int_{M^4}|R_{g}|^{2}dv_g<\infty,
\end{equation}
and the metric is normal on each end.
Then $$\int_{M^4} \Delta_g R_g dv_g=0.$$
\end{lem}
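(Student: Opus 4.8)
The plan is to localize the integral, convert it to a Euclidean boundary flux through the conformal structure, and then make the flux vanish using the $L^2$-bound on the scalar curvature together with the decay estimates already established for normal metrics.

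First I would read $\int_{M^4}\Delta_g R_g\,dv_g$ as $\lim_{r\to\infty}\int_{B_r}\Delta_g R_g\,dv_g$ over Euclidean balls. Since $g=e^{2u}|dx|^2$ in dimension $4$ gives $dv_g=e^{4u}dx$ and $\Delta_g f=e^{-4u}\,\mathrm{div}\!\big(e^{2u}\nabla f\big)$ (Euclidean $\mathrm{div}$ and $\nabla$), the integrand is an exact Euclidean divergence,
\[
\Delta_g R_g\,dv_g=\mathrm{div}\!\big(e^{2u}\nabla R_g\big)\,dx ,
\]
so the divergence theorem reduces everything to the flux
\[
\int_{B_r}\Delta_g R_g\,dv_g=\int_{\partial B_r}e^{2u}\,\partial_\nu R_g\,d\sigma ,
\]
with $\nu,d\sigma$ Euclidean. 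It therefore suffices to show this boundary term tends to $0$.

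The key reformulation is to set $S:=e^{2u}R_g$. Because $dv_g=e^{4u}dx$, the hypothesis $\int_{M^4}|R_g|^2\,dv_g<\infty$ says precisely that $S\in L^2(\mathbb{R}^4)$. The conformal scalar-curvature formula reads $S=-6(\Delta u+|\nabla u|^2)$, and from $\nabla R_g=e^{-2u}(\nabla S-2S\nabla u)$ the flux splits as
\[
\int_{\partial B_r}e^{2u}\partial_\nu R_g\,d\sigma
=\int_{\partial B_r}\partial_\nu S\,d\sigma-2\int_{\partial B_r}S\,\partial_\nu u\,d\sigma .
\]
For the lower-order term I would use a good-sequence argument: since $S\in L^2$, the function $r\mapsto\int_{\partial B_r}S^2\,d\sigma$ is integrable on $(0,\infty)$, hence $\liminf_{r\to\infty}r\int_{\partial B_r}S^2\,d\sigma=0$; choosing $r_j\to\infty$ realizing this liminf and invoking the normal-metric gradient decay $|\nabla u|=O(|x|^{-1})$ (already present in the proof of Lemma \ref{l1} for $\nabla v$, together with $u=v+\text{const}$), Cauchy--Schwarz gives
\[
\Big|\int_{\partial B_{r_j}}S\,\partial_\nu u\,d\sigma\Big|
\le \frac{C}{r_j}\Big(\int_{\partial B_{r_j}}S^2\,d\sigma\Big)^{1/2}(c\,r_j^3)^{1/2}
=C'\Big(r_j\!\int_{\partial B_{r_j}}S^2\,d\sigma\Big)^{1/2}\to 0 .
\]

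The top-order term $\int_{\partial B_r}\partial_\nu S\,d\sigma=\int_{B_r}\Delta S\,dx$ is where the real work lies, since $S\in L^2$ does not by itself control $\nabla S$. Here I would integrate by parts once more: using $S=-6(\Delta u+|\nabla u|^2)$ and the fourth-order equation $\Delta^2 u=2Q_g e^{4u}$ (valid because the metric is normal), one has $\Delta S=-12\,Q_g e^{4u}-6\,\Delta|\nabla u|^2$, whose first term is integrable by the finite-total-$Q$-curvature hypothesis, while $\int_{B_r}\Delta|\nabla u|^2\,dx=\int_{\partial B_r}\partial_\nu|\nabla u|^2\,d\sigma$ is again a boundary flux governed by the Hessian of the normal potential. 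I expect this last step to be the main obstacle: the crude pointwise bound $|\nabla^2u|=O(|x|^{-2})$ makes $|\nabla^2u|^2$ only borderline (logarithmically) non-integrable, so one must exploit the finer asymptotics furnished by the integral representation of $u$ — essentially the one-derivative-higher analogues of Lemmas \ref{l1} and \ref{l2} — rather than size bounds alone. Once the flux is shown to vanish along $r_j$, the interpretation of $\int_{M^4}\Delta_g R_g\,dv_g$ as the limit over the exhausting balls yields $\int_{M^4}\Delta_g R_g\,dv_g=0$, which is the claim.
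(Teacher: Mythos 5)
Your reduction to a boundary flux is sound as far as it goes, but the proposal does not close: the term you yourself flag as ``the main obstacle'' is in fact the entire content of the lemma, and it is left unproven. Concretely, after your substitution $S=e^{2u}R_g$ and the identity $\Delta S=-12\,Q_g e^{4u}-6\,\Delta|\nabla u|^2$, the top-order piece becomes
\begin{equation*}
\int_{\partial B_r}\partial_\nu S\,d\sigma \;=\; -12\int_{B_r}Q_g\,dv_g\;-\;6\int_{\partial B_r}\partial_\nu|\nabla u|^2\,d\sigma ,
\end{equation*}
and since $\int_{B_r}Q_g\,dv_g$ converges to the (generically nonzero) total $Q$-curvature, your argument requires the exact identity $\lim_{r\to\infty}\int_{\partial B_r}\partial_\nu|\nabla u|^2\,d\sigma=-2\int_{M^4}Q_g\,dv_g$. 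This is a delicate cancellation between the asymptotics of $|\nabla u|^2$ and the total $Q$-curvature (heuristically, with $u\sim-\alpha\log|x|$ both sides equal $-16\pi^2$ only because the $L^2$ scalar curvature condition forces $\alpha(\alpha-2)=0$); establishing it is essentially equivalent to the lemma itself, so the proposal is circular at its core step. A secondary issue: you invoke the pointwise decay $|\nabla u|=O(|x|^{-1})$, but the normal-metric representation only yields integral bounds such as $\int_{\partial B_r}|\nabla v|\,d\sigma=O(r^{n-2})$ and $\fint_{B_{2r}\setminus B_r}|\nabla v|^2=O(r^{-2})$ (Lemmas \ref{l1} and \ref{l2}); a pointwise bound fails in general where $Q_ge^{4u}$ concentrates, though this part could be patched by Cauchy--Schwarz on spheres.

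The paper avoids all of this by never differentiating $R_g$ (or $S$) at all: it integrates against a cutoff $\eta_\rho$ and moves \emph{both} derivatives onto $\eta_\rho$, writing $\int_{M^4}\Delta_gR_g\,dv_g=\lim_\rho\int_{B^0(0,2\rho)\setminus B^0(0,\rho)}R_g\,\Delta_g\eta_\rho\,dv_g$. The resulting two terms are then killed by Cauchy--Schwarz: the annular $L^2$-norm of $R_g$ is the tail of a convergent integral and hence tends to zero, while the only information needed about $u$ is the integral bound $\int_{B^0(0,2\rho)\setminus B^0(0,\rho)}|\partial_i u|^2\,dx=O(\rho^2)$ coming from the normal-metric representation --- precisely the kind of averaged estimate that is actually available. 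I would recommend restructuring your argument around this ``move the derivatives onto the test function'' device rather than pursuing sharp asymptotics of $\nabla^2 u$.
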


\begin{proof}[Proof of Lemma \ref{vanishing}]
Let $B^0(0, \rho)$ be the ball centered at the origin, with radius $\rho$ with respect
to the Euclidean metric. On the Euclidean space, there always
exists a smooth cut-off function $\eta_\rho$ which is supported on $B^0(0, 2\rho)$.
It is equal to $1$ on $B^0(0, \rho)$, and its $k$-th derivative
is of order $O(1/\rho^k)$ over the annulus $B^0(0, 2\rho)\setminus B^0(0, \rho)$.
Again since the $Q$-curvature is absolutely integrable, so is $\Delta_gR_g$.

Since $\eta_\rho=1$ on $B^0(0,\rho) $,
\begin{equation}\label{2.3}
\begin{split}
&\int_{M^4}\Delta_gR_g d v_g\\
=&\lim_{\rho\rightarrow\infty} \int_{  B^0(0, 2\rho) }\Delta_gR_g \eta_\rho d v_g\\
=&\lim_{\rho\rightarrow\infty} \int_{B^0(0, 2\rho)\setminus B^0(0, \rho)}R_g \Delta_g \eta_\rho d v_g.\\
\end{split}\end{equation}
Here the last equality holds because all boundary terms in the integration by parts formula vanish,
and $\Delta_g\eta_\rho=0$ on the complement of $B^0(0, 2\rho)\setminus B^0(0, \rho)$.

Using
$$ dv_g=e^{4w}dx,$$
$$\Delta_g\eta_\rho dv_g=\partial_i(e^{2w}\partial_i\eta_\rho) dx, $$ we have
\begin{equation} \begin{split}
&\int_{B^0(0, 2\rho)\setminus B^0(0, \rho)}  R_g \Delta_g\eta_\rho dv_g\\
&=
\int_{B^0(0, 2\rho)\setminus B^0(0, \rho)}  R_g \partial_i(e^{2w}\partial_i\eta_\rho )dx \\
=& \int_{B^0(0, 2\rho)\setminus B^0(0, \rho)}  R_g (\Delta_0\eta_\rho e^{2w}+ \partial_i(e^{2w})\partial_i\eta_\rho )dx \\
\leq&  C\int_{B^0(0, 2\rho)\setminus B^0(0, \rho)}   \frac{R_g}{\rho^2} e^{2w}dx\\
&   + C \int_{B^0(0, 2\rho)\setminus B^0(0, \rho)}  \frac{R_g |\partial_i w|}{\rho}e^{2w}dx \\
=:& I+II.\\
\end{split} \end{equation}
The first term $I$ can be bounded by the $L^2$-norm of the scalar curvature.
\begin{equation}\begin{split}
| I|\leq&C (\int_{B^0(0, 2\rho)\setminus B^0(0, \rho)} | R_g|^2 e^{4w}dx)^{1/2}\cdot
(\int_{B^0(0, 2\rho)\setminus B^0(0, \rho)} \frac{1}{\rho^4}dx)^{1/2}\\
&\rightarrow 0,\\
\end{split}\end{equation}
as $\rho$ tends to $\infty$.

We will now study $II$ through the asymptotic behavior of the derivatives of $w$. We notice that the pointwise estimate of
$\partial_i w$ is not known. But since we are taking the integral over the annulus (with respect to the Euclidean metric),
it can be reduced to the integral estimate of $\partial_i w$ over spheres at the end of the manifold.
\begin{equation}\begin{split}
|II|=&C \left|\int_{B^0(0, 2\rho)\setminus B^0(0, \rho)}  \frac{R_g \partial_i w}{\rho}e^{2w}dx\right|\\
\leq&C (\int_{B^0(0, 2\rho)\setminus B^0(0, \rho)} | R_g|^2 e^{4w}dx)^{1/2}\cdot
(\int_{B^0(0, 2\rho)\setminus B^0(0, \rho)} \frac{|\partial_i w|^2}{\rho^2}dx)^{1/2}.\\
\end{split}\end{equation}
Notice that
\begin{equation}\begin{split}\label{3.1}
&  \int_{B^0(0, 2\rho)\setminus B^0(0, \rho)} |\partial_i w|^2dx\\
=&\int_{B^0(0, 2\rho)\setminus B^0(0, \rho)}\left|\frac1{4\pi^2}\int_{\mathbb R^4}\frac{x_i-y_i}{|x-y|^2}Q e^{4w(y)} d y\right|^2 d v_0\\
\leq&C \int_{B^0(0, 2\rho)\setminus B^0(0, \rho)} \left| \int_{\mathbb{R}^4}\frac{1}{|x-y|}Q(y)e^{4w(y)} dy\right|^2dx\\
\leq&C \int_{B^0(0, 2\rho)\setminus B^0(0, \rho)}\int_{\mathbb{R}^4}\frac{1}{|x-y|^2}  Q(y)e^{4w(y)} dydx \cdot \int_{\mathbb{R}^4} Q(y)e^{4w(y)} dy.\\
\end{split}\end{equation}
Since for any $y\in \mathbb{R}^4$, we have
$$\int_{x\in \partial B^0(0, r)} \frac{1}{|x-y|^2} d\sigma(x)=| \partial B^0(0, r)|\cdot O(\frac{1}{r^2}),$$
\begin{equation} \begin{split}\int_{B^0(0, 2\rho)\setminus B^0(0, \rho)}\frac{1}{|x-y|^2} dx=&\int_\rho^{2\rho}
\int_{x\in \partial B^0(0, r)} \frac{1}{|x-y|^2} d\sigma(x)dr\\
=&\int_\rho^{2\rho}| \partial B^0(0, r)|\cdot O(\frac{1}{r^2})
dr=O(\rho^2).\\\end{split}
\end{equation}
Plugging this into (\ref{3.1}), and using the fact that  $\int_{\mathbb{R}^4}Q(y)e^{4w(y)} dy<\infty$,
we obtain
\begin{equation}\begin{split}
\int_{B^0(0, 2\rho)\setminus B^0(0, \rho)} |\partial_i w|^2dx
\leq C (\int_{\mathbb{R}^4}Q(y)e^{4w(y)} dy)^2 \cdot O(\rho^2)= O(\rho^2).
\end{split}\end{equation}
Therefore,
\begin{equation}\begin{split}
| II|\leq&C(\int_{B^0(0, 2\rho)\setminus B^0(0, \rho)} | R|^2 dv_g)^{1/2}\cdot
(\frac{1}{\rho^2}\int_{B^0(0, 2\rho)\setminus B^0(0, \rho)} |\partial_i w|^2dx)^{1/2}\\
\leq&C(\int_{B^0(0, 2\rho)\setminus B^0(0, \rho)} | R|^2 dv_g)^{1/2}\rightarrow 0\\
\end{split}\end{equation}
as $\rho$ tends to $\infty$.
To conclude,
\begin{equation}\begin{split}
|\int_{M^4} \Delta_g R_g dv_g|=&\lim_{\rho\rightarrow\infty}| \int_{B^0(0, 2\rho)\setminus B^0(0, \rho)} R_g \Delta_g \eta_\rho dv_g| \\
\leq& \lim_{\rho\rightarrow\infty} (|I|+|II|)=0.\\  \end{split}\end{equation}

This completes the proof of the lemma.
\end{proof}

%This can be proved following a similar proof of \cite[Lemma 3.2]{LW}, only replacing the $L^4$ bound of the second fundamental form by the $L^2$ norm of the scalar curvature in (3.7), (3.8) and (3.12) of \cite[Lemma 3.2]{LW}.

From this, and inequality \eqref{strictSigma}, we deduce \eqref{strictQ}. Therefore we complete the proof of the existence of a quasiconformal map. 
The isoperimetric inequality is a direct consequence of the existence of such an $H$-quasiconformal map.
\end{proof}

\begin{bibdiv}
\begin{biblist}

\bib{Alex1}{article}{
   author={Alexakis, Spyros},
   title={On the decomposition of global conformal invariants. I},
   journal={Ann. of Math. (2)},
   volume={170},
   date={2009},
   number={3},
   pages={1241--1306},
   issn={0003-486X},
   review={\MR{2600873}},
   doi={10.4007/annals.2009.170.1241},
}

\bib{Alex2}{book}{
author={Alexakis, Spyros},
title={The decomposition of global conformal invariants},
series={Annals of Mathematics Studies},
volume={182},
publisher={Princeton University Press, Princeton, NJ},
date={2012},
pages={x+449},
}

\bib{Branson}{article}{
   author={Branson, Thomas P.},
   title={Sharp inequalities, the functional determinant, and the
   complementary series},
   journal={Trans. Amer. Math. Soc.},
   volume={347},
   date={1995},
   number={10},
   pages={3671--3742},
   issn={0002-9947},
   review={\MR{1316845}},
   doi={10.2307/2155203},
}

\bib{CQY1}{article}{
   author={Chang, Sun-Yung A.},
   author={Qing, Jie},
   author={Yang, Paul C.},
   title={On the Chern-Gauss-Bonnet integral for conformal metrics on $\mathbf 
   R^4$},
   journal={Duke Math. J.},
   volume={103},
   date={2000},
   number={3},
   pages={523--544},
   issn={0012-7094},
   review={\MR{1763657}},
   doi={10.1215/S0012-7094-00-10335-3},
}

\bib{CQY2}{article}{
   author={Chang, Sun-Yung A.},
   author={Qing, Jie},
   author={Yang, Paul C.},
   title={Compactification of a class of conformally flat 4-manifold},
   journal={Invent. Math.},
   volume={142},
   date={2000},
   number={1},
   pages={65--93},
   issn={0020-9910},
   review={\MR{1784799}},
   doi={10.1007/s002220000083},
}

\hide{
\bib{Chern}{book}{
author={Chern, S. S.},
author={Chen, W. H.},
author={Lam, K. S.},
title={Lectures on differential geometry},
series={Series on University Mathematics},
volume={1},
publisher={World Scientific Publishing Co., Inc., River Edge, NJ},
date={1999},
pages={x+356},
}}

\hide{\bib{Chern-Osserman}{article}{
author={Chern, Shiing-Shen},
author={Osserman, Robert},
title={Complete minimal surfaces in euclidean $n$-space},
journal={J. Analyse Math.},
volume={19},
date={1967},
pages={15--34},
}

\bib{Cohn-Vossen}{article}{
   author={Cohn-Vossen, Stefan},
   title={K\"urzeste Wege und Totalkr\"ummung auf Fl\"achen},
   language={German},
   journal={Compositio Math.},
   volume={2},
   date={1935},
   pages={69--133},
   issn={0010-437X},
   review={\MR{1556908}},
}}

\hide{
\bib{Saloff}{article}{
   author={Coulhon, Thierry},
   author={Saloff-Coste, Laurent},
   title={Isop\'erim\'etrie pour les groupes et les vari\'et\'es},
   language={French},
   journal={Rev. Mat. Iberoamericana},
   volume={9},
   date={1993},
   number={2},
   pages={293--314},
   issn={0213-2230},
   review={\MR{1232845}},
   doi={10.4171/RMI/138},
}}

%\bib{Dr}{article}{
%  AUTHOR = {Druet, Olivier},
%    TITLE = {Sharp local isoperimetric inequalities involving the scalar
%             curvature},
%  JOURNAL = {Proc. Amer. Math. Soc.},
% FJOURNAL = {Proceedings of the American Mathematical Society},
%   VOLUME = {130},
%     YEAR = {2002},
%   NUMBER = {8},
%    PAGES = {2351--2361 (electronic)},
%     ISSN = {0002-9939},
%    CODEN = {PAMYAR},
%  MRCLASS = {53C21 (35J20)},
% MRNUMBER = {1897460 (2003b:53036)},
% MRREVIEWER = {Gilles Carron},
%     }

\hide{
\bib{DS1}{article}{
   author={David, Guy},
   author={Semmes, Stephen},
   title={Strong $A_\infty$ weights, Sobolev inequalities and
   quasiconformal mappings},
   conference={
      title={Analysis and partial differential equations},
   },
   book={
      series={Lecture Notes in Pure and Appl. Math.},
      volume={122},
      publisher={Dekker, New York},
   },
   date={1990},
   pages={101--111},
   review={\MR{1044784}},
}}

\hide{\bib{FeffermanGraham}{article}{
   author={Fefferman, Charles},
   author={Graham, C. Robin},
   title={The ambient metric},
   series={Annals of Mathematics Studies},
   volume={178},
   publisher={Princeton University Press, Princeton, NJ},
   date={2012},
   pages={x+113},
   isbn={978-0-691-15313-1},
   review={\MR{2858236}},
}}

\bib{Fiala}{article}{
   author={Fiala, F.},
   title={Le probl\`eme des isop\'erim\`etres sur les surfaces ouvertes \`a
   courbure positive},
   language={French},
   journal={Comment. Math. Helv.},
   volume={13},
   date={1941},
   pages={293--346},
   issn={0010-2571},
   review={\MR{0006422}},
}

\bib{Finn}{article}{
   author={Finn, Robert},
   title={On a class of conformal metrics, with application to differential
   geometry in the large},
   journal={Comment. Math. Helv.},
   volume={40},
   date={1965},
   pages={1--30},
   issn={0010-2571},
   review={\MR{0203618}},
}

%  \bib{GrahamZworski}{article}{
%   AUTHOR = {Graham, C. Robin} AUTHOR={Zworski, Maciej},
%    TITLE = {Scattering matrix in conformal geometry},
%  JOURNAL = {Invent. Math.},
% FJOURNAL = {Inventiones Mathematicae},
%   VOLUME = {152},
%     YEAR = {2003},
%   NUMBER = {1},
%    PAGES = {89--118},
%     ISSN = {0020-9910},
%    CODEN = {INVMBH},
%  MRCLASS = {58J50},
% MRNUMBER = {1965361 (2004c:58064)},
% MRREVIEWER = {Andrew W. Hassell},
%      %DOI = {10.1007/s00222-002-0268-1},
%      URL = {http://dx.doi.org/10.1007/s00222-002-0268-1},
% }

\bib{Hartman}{article}{
   author={P. Hartman},
   title={Geodesic parallel coordinates in the large},
   journal={Amer. J. Math.},
   volume={86},
   date={1964},
   pages={705--727},
   issn={0002-9327},
}

\bib{Huber}{article}{
   author={Huber, Alfred},
   title={On subharmonic functions and differential geometry in the large},
   journal={Comment. Math. Helv.},
   volume={32},
   date={1957},
   pages={13--72},
   issn={0010-2571},
   review={\MR{0094452}},
}

\hide{\bib{lu}{article}{
   author={Lu, Zhiqin},
   title={On the lower order terms of the asymptotic expansion of
   Tian-Yau-Zelditch},
   journal={Amer. J. Math.},
   volume={122},
   date={2000},
   number={2},
   pages={235--273},
   issn={0002-9327},
   %review={\MR{1749048 (2002d:32034)}},
}}

\bib{LW}{article}{
   author={Lu, Zhiqin},
   author={Wang, Yi}
   title={On locally conformally flat manifolds with finite total Q-curvature},
   journal={to appear in Cal. Var. PDEs},
   %volume={122},
  % date={2000},
  % number={2},
  % pages={235--273},
   %issn={0002-9327},
   %review={\MR{1749048 (2002d:32034)}},
}

\hide{\bib{Stein}{book}{
   author={Stein, Elias M.},
   title={Harmonic analysis: real-variable methods, orthogonality, and
   oscillatory integrals},
   series={Princeton Mathematical Series},
   volume={43},
   note={With the assistance of Timothy S. Murphy;
   Monographs in Harmonic Analysis, III},
   publisher={Princeton University Press, Princeton, NJ},
   date={1993},
   pages={xiv+695},
   isbn={0-691-03216-5},
   review={\MR{1232192}},
}

\bib{Varopoulos}{article}{
   author={Varopoulos, N. Th.},
   title={Small time Gaussian estimates of heat diffusion kernels. I. The
   semigroup technique},
   journal={Bull. Sci. Math.},
   volume={113},
   date={1989},
   number={3},
   pages={253--277},
   issn={0007-4497},
   review={\MR{1016211}},
}}

\bib{YW1}{article}{
   author={Wang, Yi},
   title={The isoperimetric inequality and quasiconformal maps on manifolds
   with finite total $Q$-curvature},
   journal={Int. Math. Res. Not. IMRN},
   date={2012},
   number={2},
   pages={394--422},
   issn={1073-7928},
   review={\MR{2876387}},
}

\bib{YW2}{article}{
author={Wang, Yi},
title={The isoperimetric inequality and $Q$-curvature},
journal={Adv. Math.},
volume={281},
date={2015},
pages={823--844},
}

\hide{
\bib{Weyl}{book}{
author={Weyl, Hermann},
title={The Classical Groups. Their Invariants and Representations},
publisher={Princeton University Press, Princeton, N.J.},
date={1939},
pages={xii+302},
}

\bib{White}{article}{
author={White, Brian},
title={Complete surfaces of finite total curvature},
journal={J. Differential Geom.},
volume={26},
date={1987},
number={2},
pages={315--326},
issn={0022-040X},
%   review={\MR{906393 (88m:53020)}},
}

\bib{WhiteErratum}{article}{
   author={White, Brian},
   title={Correction to: ``Complete surfaces of finite total curvature'' [J.
   Differential Geom.\ {\bf 26} (1987), no.\ 2, 315--326; MR0906393
   (88m:53020)]},
   journal={J. Differential Geom.},
   volume={28},
   date={1988},
   number={2},
   pages={359--360},
   issn={0022-040X},
   %review={\MR{961520 (89j:53009)}},
}}

\end{biblist}
\end{bibdiv}

\end{document}